\newtheoremstyle{plain}
  {\topsep}
  {\topsep}
  {\itshape}
  {0pt}
  {\bfseries}
  {~---}
  {5pt plus 1pt minus 1pt}
  {}
\newtheoremstyle{definition}
  {\topsep}
  {\topsep}
  {\normalfont}
  {0pt}
  {\bfseries}
  {~---}
  {5pt plus 1pt minus 1pt}
  {}
\newtheoremstyle{remark}
  {0.5\topsep}
  {0.5\topsep}
  {\normalfont}
  {0pt}
  {\itshape}
  {~---}
  {5pt plus 1pt minus 1pt}
  {}
\newtheoremstyle{note}
  {0.5\topsep}
  {0.5\topsep}
  {\normalfont} 
  {0pt}
  {\bfseries}
  {~---}
  {5pt plus 1pt minus 1pt}
  {}
\theoremstyle{plain}
\newtheorem{theorem}{Theorem}[section]
\newtheorem{proposition}[theorem]{Proposition}
\theoremstyle{definition}
\newtheorem{definition}[theorem]{Definition}
\theoremstyle{remark}
\newtheorem{remark}[theorem]{Remark}
\theoremstyle{note}
\DeclareMathOperator*{\argmin}{arg\,min}
\newcommand*{\NN}[1][] {\mathbb{N}^{#1}}
\newcommand*{\RR}[1][] {\mathbb{R}^{#1}}
\newcommand*{\norm}[1]{\left\lVert#1\right\rVert}
\newcommand*{\abs}[1]{\left\vert#1\right\vert}
\newcommand*{\diff}{\@ifnextchar^{\DIfF}{\DIfF^{}}}
\def\DIfF^#1{\mathop{\mathrm{\mathstrut d}}\nolimits^{#1}\gobblespace}
\def\gobblespace{\futurelet\diffarg\opspace}
\def\opspace{\let\DiffSpace\!\ifx\diffarg(\let\DiffSpace\relax%
  \else\ifx\diffarg[\let\DiffSpace\relax%
  \else\ifx\diffarg\{\let\DiffSpace\relax\fi\fi\fi\DiffSpace}
\newcommand*{\deriv}[3][]{\frac{\diff^{#1}#2}{\diff #3^{#1}}}
\newcommand*{\pderiv}[3][]{\frac{\partial^{#1}#2}{\partial #3^{#1}}}
\newcommand*{\xy}[2]{\begin{pmatrix} #1\\ #2 \end{pmatrix}} 
\newcommand*{\ord}[1]{\mathcal{O}(#1)}
\newcommand*{\order}[1][]{\ord{h^{#1}}}
\newcommand*{\braket}[2]{\left<{#1},{#2}\right>}
\newcounter{stepnum}
\newenvironment{step}[1][]{\refstepcounter{stepnum}\par\medskip
   \noindent {\itshape Step~\thestepnum}~--- #1 \rmfamily}{\par\medskip}
\newcommand*{\lac}[1][]{\xi^{#1}}
\newcommand*{\blac}{{\xi^{\alpha}}}
\newcommand*{\csp}{\gamma_{\mathrm{cs}}}
\newcommand*{\ra}{\rho}         
\newcommand*{\cu}{\kappa}       
\newcommand*{\tu}{\theta}       
\newcommand*{\rotrm}[1]{\,\mathrm{R}_{#1}\,}
\begin{document}
\begin{center}
  \Large
  Fairing of planar curves to log-aesthetic curves\\[5mm]

  \normalsize
  Sebasti\'an El\'ias {\sc Graiff Zurita}\\
  Graduate School of Mathematics
  Kyushu  University\\
  744 Motooka, Nishi-ku, Fukuoka 819-0935, Japan\\[2mm] 
  Kenji {\sc Kajiwara}\\
  Institute of Mathematics for Industry, Kyushu University\\
  744 Motooka, Fukuoka 819-0395, Japan\\
  Kenjiro T. {\sc Miura}\\
  Graduate School of Science and Technology, Shizuoka University\\
  3-5-1 Johoku, Hamamatsu, Shizuoka, 432-8561, Japan
\end{center}

\begin{abstract} %
  We present an algorithm to fair a given planar curve by a log-aesthetic curve (LAC). We show how a
  general LAC segment can be uniquely characterized by seven parameters and present a method of
  parametric approximation based on this fact. This work aims to provide tools to be used in reverse
  engineering for computer aided geometric design. Finally, we show an example of usage by applying
  this algorithm to the point data obtained from 3D scanning a car's roof.
\end{abstract}

\section{Introduction} \label{sec:cont.planar} %
In 1994, Harada et al., while investigating the mental images that aesthetically pleasing shapes
suggest, set up an experiment to quantitatively analyze a curve's character from the viewpoint of
the observer \cite{hara94}.  Their main result may be described as follows: the curves that car
designers regard as ``{\em aesthetic}'' have the common property that the frequency histogram of the
radius of curvature follows a piecewise linear relation in a log-log scale.
An analytic formulation of those curves was provided in \cite{miur06,yosh06} defining what will
later be called as the {\em log-aesthetic curves (LAC)}, which promoted theoretical and practical
studies of LAC towards their use in computer aided geometric design as indicated by Levien and
S\'equin \cite{Levien-Sequin:2009}.
In this regard, several works have been written regarding the implementation and construction of LAC
with fixed boundary conditions, see for example \cite{gobi11,gobi14,ziat12}. Furthermore, extensions
to surfaces have also been considered with an emphasis in providing practical tools for industrial
design, see \cite{miur15}.
From a different point of view, LAC have been characterized as curves that are obtained via a
variational principle in the framework of similarity geometry, moreover they can also be seen as
invariant curves under the integrable flow on plane curves which is governed by the Burgers equation
\cite{inog18}.  This fact was also shown to be useful at providing an integrable discretization of
the LAC that preserves the underlying geometric structure \cite{inog21}.
All these previous works contributed to constructing methods that generate a desirable shape with
given fixed conditions. In this paper, in view reverse engineering applications, we provide a method
to characterize a given curve by its closest LAC.  For this purpose, we first show how a general LAC
segment can be uniquely identify by seven parameters, and we then apply a method of parametric
approximation.  Our method is based on the one presented in \cite{Brander-Gravesen-Norbjerg:2017} to
approximate a given planar curve by an Euler's elastica, and in its follow-up work
\cite{GraiffZurita-Kajiwara-Suzuki:2022} for the discrete Euler's elastica.
Finally, we apply our algorithm to the 3D scanner data obtained from an actual car roof and identify
their underlying LAC.

\section{Log-aesthetic curve}
\subsection{Basic formulation}
First of all, let us review some definitions regarding planar curves.  Let $\gamma(s)\in\RR[2]$ be a
planar curve parameterized by arc length $s \in [0, L]$ with total length $L$. We define the tangent
and normal vectors as $T(s) = \gamma'(s)$ and $N(s) = \rotrm{\pi\!/2}T(s)$, respectively, where
$\rotrm{\pi\!/2}$ is a $\pi\!/2$ rotation matrix and ${\text{ }}'=\diff{}/\diff{s}$. By definition,
$\norm{T(s)} = 1$, so the tangent vector can be parameterized as
\begin{equation}
  T(s) = \begin{pmatrix}
    \cos\tu(s)\\
    \sin\tu(s)
  \end{pmatrix},
\end{equation}
where $\tu(s)$ is the turning angle, namely, the angle of the tangent vector measured from the
horizontal axis, so that $\cu(s) = \tu'(s)$ is the curvature and $\ra(s) = 1/\cu(s)$ the radius of
curvature.

As it is shown in \cite{miur06,yosh06}, by considering the analytic formulation of the work by
Harada mentioned in the Introduction, it can be seen that an log-aesthetic curve satisfies
\begin{equation} \label{eq:eq1}
  \log\left(\deriv{s}{R}\right) = \alpha R - \log A,
\end{equation}
for some $\alpha \in \RR$ and $A > 0$, where $R=\log \ra$.  Assuming that functions are well
behaved, we have $\deriv{s}{R} = \ra/\ra'$, then \eqref{eq:eq1} can be rewritten as
\begin{equation} \label{eq:lac}
\ra' \ra^{(\alpha - 1)} = A,
\end{equation}
or equivalently, by taking the derivative of \eqref{eq:lac}, as
\begin{equation} \label{eq:lac.ra}
\ra \ra'' + (\alpha - 1) (\ra')^2 = 0.
\end{equation}
Although \eqref{eq:eq1} is presented in \cite{miur06} as the defining equation of log-aesthetic
curves, we consider \eqref{eq:lac.ra} written in terms of the curvature as an alternative
definition.

\begin{definition}[Log-aesthetic curve]
  \label{def:lac} An arc length parameterized curve $\gamma(s)$ with strictly monotonic radius of
  curvature is called a {\em log-aesthetic curve (LAC)} if its curvature satisfies
  \begin{equation} \label{eq:lac.cu}
    \cu \cu'' - (\alpha + 1) \left(\cu'\right)^2 = 0,
  \end{equation}
  for some constant $\alpha \in \RR$.
\end{definition}

The fundamental theorem of planar curves states that an arc length parameterized planar curve is
uniquely determined by its curvature up to Euclidean transformations.  In addition, the curvature of
an LAC determined by \eqref{eq:lac.cu} has two arbitrary parameters.
In view of this, here we introduce the {\em basic LAC} (Definition \ref{def:blac}) by fixing this
freedom, and show that one can recover arbitrary LAC by applying the similarity transformations and
shift of arc length (Proposition \ref{prop:lac.blac}).

Let us see that for a given LAC, its parameter $\alpha$ is invariant under the similarity
transformations and the reflections.  Noticing that the curvature of a planar curve is invariant
under the Euclidean transformations, we only check the invariance under the scale transformations
and the reflections over the diagonal $\{(x,x) \in \RR[2] | x \in \RR\}$.  For the scale
transformations, consider the arc length parameterized LAC $\gamma(s)$ satisfying \eqref{eq:lac.cu},
for some $\alpha \in \RR$, and define $\tilde\gamma(\tilde{s}) := S \gamma(\tilde{s}/S),$ where
$S > 0$.  Then $\tilde\cu(\tilde{s})$, the curvature of $\tilde\gamma(\tilde{s})$, is given by
$\tilde\cu(\tilde{s}) = S^{-1} \cu(\tilde{s}/S)$ and it satisfies \eqref{eq:lac.cu}.
For the reflections over the diagonal, note that interchanging the $x-$ and $y-$component of the
curve is equivalent to changing the sign of the curvature, and \eqref{eq:lac.cu} is invariant under
that change.

\begin{definition}[Basic LAC] \label{def:blac}
  Let $\blac(s)$ be an LAC defined over an open interval $I \subset \RR$, such that $\{0\} \in I$,
  and satisfying
  \begin{equation} \label{eq:blac.def}
    \begin{cases}
      \cu'(s)= - (\cu(s))^{(\alpha + 1)} < 0, \quad \forall s \in I,\\
      \cu(0) = 1,\\
      \tu(0) = 0,\\
      \blac(0) = 0.
    \end{cases}
  \end{equation}
  We call $\blac(s)$ a {\em basic LAC}.
\end{definition}

Let us see a more explicit expression for the basic LAC and its related quantities.  In what
follows, we use the sub-index $\blac$, as for example $\cu_\blac$, to denote those quantities
associated to their respective basic LAC.  Taking the initial condition into consideration, the
explicit form of the curvature is given by
\begin{equation} \label{eq:blac.cu}
  \cu_{\blac}(s) =
  \begin{cases}
    \exp(-s), & \alpha = 0,\\
    (1 + \alpha s)^{-1/\alpha}, & \alpha \neq 0.
  \end{cases}
\end{equation}
Then, the turning angle is obtained from the curvature by $(\tu_{\blac})' = \cu_{\blac}$,
\begin{equation} \label{eq:blac.tu}
  \tu_{\blac}(s) =
  \begin{cases}
    1 - \exp(-s), & \alpha = 0,\\
    \log(s + 1), & \alpha = 1,\\
    \frac{(1 + \alpha s)^{\frac{\alpha - 1}{\alpha}} - 1}{\alpha - 1}, & \alpha \neq 0, 1.
  \end{cases}
\end{equation}
Finally, the position vector can be obtained from the tangent vector as
\begin{equation} \label{eq:blac.pos}
  \blac(s) = \int_0^s \xy{\cos \tu_{\blac}(\bar{s})}{\sin \tu_{\blac}(\bar{s})} \diff{\bar{s}}.
\end{equation}
Although it is not used in this work, we note that the position vector can be expressed in terms of
the incomplete gamma function, see for example \cite{ziat12}.  For simplicity we consider the case
in which $1 + \alpha s > 0$.  In this case, the maximal interval $I_\alpha \subset \RR$ on which the
basic LAC can be defined is
\begin{equation}
  I_\alpha = \begin{cases}
    \left(-\infty, -1/\alpha\right), & \alpha < 0,\\
    \left(-\infty, \infty\right), & \alpha = 0,\\
    \left(-1/\alpha, \infty\right), & \alpha > 0,
  \end{cases}
\end{equation}
and we assume that all basic LAC are defined over $I_\alpha$.  Finally, note that the image of
$\cu_\blac$ is $\cu_\blac[I_\alpha] = (0, \infty)$.

\begin{proposition} \label{prop:lac.blac} %
  Any LAC with $\alpha \neq 1$ and positive and decreasing curvature can be expressed as a basic LAC
  after applying the similarity transformations and shift of the arc length parameter.  In
  particular, if $\gamma(s)$, $s \in [0, L]$, is an LAC of length $L$, there exists a unique
  $\gamma_0 \in \RR[2]$, $\phi \in [0, 2\pi)$, $S \in \RR\backslash\{0\}$, and $s_0 \in \RR$, such
  that
  \begin{equation} \label{eq:lac.blac}
    \gamma(s) = \gamma_0 + S \rotrm{\phi} \blac(s/S + s_0), \qquad s\in[0, L],
  \end{equation}
  where $\blac(s)$ is a basic LAC of length $l := L/S$.
\end{proposition}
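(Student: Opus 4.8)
The plan is to work with the curvature $\kappa$ directly and use the fundamental theorem of planar curves. Given an LAC $\gamma(s)$, $s \in [0,L]$, with parameter $\alpha \neq 1$ and positive decreasing curvature, I first want to normalize its curvature into the form $\kappa_\blac$. Since $\kappa$ is positive and satisfies \eqref{eq:lac.cu}, a standard ODE computation (integrating \eqref{eq:lac.cu}, or equivalently \eqref{eq:lac.ra} rewritten for $\kappa$) shows that $\kappa(s) = (as+b)^{-1/\alpha}$ for suitable constants $a,b$ with $a \neq 0$ (because $\kappa$ is strictly decreasing) when $\alpha \neq 0$, and $\kappa(s) = c\,e^{-\lambda s}$ with $\lambda > 0$ when $\alpha = 0$. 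In either case there is a unique affine change of the arc length parameter, $s \mapsto s/S + s_0$, together with a scaling of the curvature by $1/S$, that sends $\kappa$ to $\kappa_\blac$; concretely, in the $\alpha \neq 0$ case one reads off $S$ from the requirement that the leading coefficient become $\alpha$ and $s_0$ so that the value at $0$ becomes $1$. The sign of $S$ is chosen according to the sign of $\kappa'$ versus the orientation, which is where the reflection over the diagonal enters: the remark preceding Definition~\ref{def:blac} guarantees $\alpha$ is unchanged under that reflection, so the normalization stays within the same $\alpha$.

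Next I would invoke invariance. By the paragraph before Definition~\ref{def:blac}, rescaling $\tilde\gamma(\tilde s) = S\gamma(\tilde s/S)$ multiplies the curvature by $S^{-1}$ and reparametrizes by $\tilde s/S$, and Euclidean motions leave curvature untouched; hence the curve $s \mapsto \blac_*(s) := \tfrac1S \rotrm{-\phi}\bigl(\gamma(S(s - s_0)) - \gamma_0\bigr)$, for the $\gamma_0,\phi,S,s_0$ extracted above, is an arc length parameterized curve whose curvature is exactly $\kappa_\blac(s)$. One then checks it meets the remaining normalizations of Definition~\ref{def:blac}: choose $\gamma_0$ so that this curve passes through $0$ at $s=0$, and choose $\phi$ so that its turning angle is $0$ at $s=0$. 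By the fundamental theorem of planar curves, a curve with prescribed curvature $\kappa_\blac$, prescribed base point $0$, and prescribed initial turning angle $0$ is unique, so $\blac_* = \blac$, which is precisely \eqref{eq:lac.blac} after solving for $\gamma$. The length statement $l = L/S$ is immediate from the reparametrization $s \mapsto s/S + s_0$.

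Finally I would address uniqueness of $(\gamma_0,\phi,S,s_0)$. Suppose two such quadruples both realize \eqref{eq:lac.blac}. Comparing curvatures forces the two affine reparametrizations of $\blac$ to have the same image curvature function; since $\kappa_\blac$ is injective as a function of its argument (it is strictly monotone on $I_\alpha$), the affine maps agree, giving $S$ and $s_0$ uniquely — here the hypothesis $\alpha \neq 1$ is not strictly needed for injectivity but matches the normalization \eqref{eq:blac.def} and avoids the degenerate logarithmic turning-angle case flagged in \eqref{eq:blac.tu}. Once $S,s_0$ are pinned down, $\gamma_0$ and $\rotrm\phi$ are determined by evaluating \eqref{eq:lac.blac} and its derivative at $s=0$, and $\phi \in [0,2\pi)$ is then unique.

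The main obstacle I anticipate is the ODE normalization step: one must carefully integrate \eqref{eq:lac.cu} to get the explicit $(as+b)^{-1/\alpha}$ / exponential forms, verify that strict monotonicity of $\rho$ forces $a \neq 0$ (equivalently rules out the constant-curvature circle, which is not an LAC in the sense of Definition~\ref{def:lac}), and track the sign of $S$ through the reflection so that the resulting basic LAC indeed satisfies $\kappa' = -\kappa^{\alpha+1} < 0$ rather than the wrong sign. Everything after that is a routine application of the fundamental theorem of planar curves together with the already-established similarity invariance of $\alpha$.
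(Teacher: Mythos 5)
Your proposal is correct and follows essentially the same route as the paper: integrate the curvature ODE to an explicit form, normalize by a scaling $S$ (solving $\alpha S^{\alpha-1}=a$, which is where $\alpha\neq 1$ is genuinely needed for existence) together with a shift $s_0$, and then invoke the fundamental theorem of planar curves to fix $\phi$ and $\gamma_0$. The only quibble is your aside that $\alpha\neq 1$ is ``not strictly needed'' in the uniqueness step: for $\alpha=1$ the self-similarity of the logarithmic spiral in fact breaks uniqueness of the pair $(S,s_0)$, which is precisely the obstruction recorded in Remark~\ref{rem:alpha1}.
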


In view of Proposition \ref{prop:lac.blac}, we denote by $\lac[p]$ the LAC segment uniquely
defined by the parameters
\begin{equation}
  p = (\alpha, S, s_0, l, \phi, x_0, y_0),
\end{equation}
where $\gamma_0 = (x_0, y_0)$.  With this notation, note that $\lac[p] = \blac$ in the case that
$p = (\alpha, 1, s_0, l, 0, 0, 0)$ for some $s_0$ and $l$ satisfying that
$(s_0, s_0 +l) \subset I_\alpha$.

\begin{proof}[Proof of Proposition \ref{prop:lac.blac}] %
  For a given LAC $\gamma(s)$, $s \in [0, L]$ with $\alpha \neq 1$ and positive and decreasing
  curvature, from Definition \ref{def:lac} its curvature satisfies \eqref{eq:lac.cu}, which can be
  integrated once to obtain
  \begin{equation} \label{eq:prop.cur}
    \cu'(s) = - A (\cu(s))^{(\alpha + 1)},
  \end{equation}
  for some $A > 0$.  Next, we consider the curve $\bar{\gamma}(\bar{s}) := S^{-1}\gamma(\bar{s}S)$,
  $\bar{s} \in [0, L/S]$, and we set $S = A^{1/(\alpha-1)}$.  We see that its curvature satisfies
  \begin{equation} \label{eq:prop.lac.blac}
    \begin{cases}
      \bar\cu'(\bar{s})= -(\bar\cu(\bar{s}))^{(\alpha + 1)},\\
      \bar\cu(0) = A^{1/(\alpha-1)} \cu(0),
    \end{cases}
  \end{equation}
  which can be integrated to obtain
  \begin{equation}
    \bar\cu(\bar s) =
    \begin{cases}
      \exp(-(\bar s - \log \bar\cu(0))), & \alpha = 0,\\
      \left[1 + \alpha \left(\bar s + \frac{(\bar\cu(0))^{-\alpha} -
            1}{\alpha}\right)\right]^{-1/\alpha}, & \alpha \neq 0.
    \end{cases}
  \end{equation}
  By comparing $\bar\cu$ with $\cu_\blac$ \eqref{eq:blac.cu} there exists a unique $s_0 \in \RR$
  such that $\bar\cu(\bar{s}) = \cu_{\blac}(\bar{s} + s_0)$, and from the fundamental theorem of
  planar curves it follows the curves $\bar\gamma$ and $\blac$ are congruent up to rigid
  transformations, i.e.
  \begin{equation}
    \bar\gamma(\bar s) = \bar\gamma_0 + \rotrm{\phi} \blac(\bar s + s_0), \qquad \bar s\in[0,
    L/S],
  \end{equation}
  for some $\bar\gamma_0\in\RR[2]$ and $\phi\in[0, 2\pi)$. Finally, we use that
  $\gamma(s) = S \bar\gamma(s/S)$ to obtain \eqref{eq:lac.blac} with $\gamma_0 := S \bar\gamma_0$.
\end{proof}

\begin{remark} \label{rem:alpha1} %
  In the proof of Proposition \ref{prop:lac.blac}, the scale
  transformation is used to alter the value of $A$ in \eqref{eq:prop.cur} without changing the value
  of $\alpha$.  However, in the case $\alpha = 1$ this technique cannot be exploited.  This is due
  to the fact that $\alpha = 1$ corresponds to the logarithmic spiral, which is a self-similar
  curve.  Particularly, $A$ in \eqref{eq:prop.cur} is invariant under the scale transformations,
  thus the technique used in Proposition \ref{prop:lac.blac} cannot be used to recover the entire
  family of LAC with $\alpha = 1$.
\end{remark}

\begin{remark}
  Let $X$ be the reflection of $\RR[2]$ defined by the map $(x,y) \mapsto (y,x)$.  If $\gamma(s)$
  with $s \in [0, L]$ is an LAC, then also are
  \begin{equation} \label{eq:lac.transf}
    \begin{cases}
      \gamma^{(1)}(s) := \gamma(L - s),\\
      \gamma^{(2})(s) := X\gamma(s),\\
      \gamma^{(3)}(s) := X\gamma(L -s).
    \end{cases}
  \end{equation}
  Moreover, their respective curvatures satisfy
  \begin{equation}
    \begin{cases}
      \cu^{(1)}(s) = -\cu(L - s),\\
      \cu^{(2)}(s) = -\cu(s),\\
      \cu^{(3)}(s) = \cu(L -s),
    \end{cases}
  \end{equation}
  which allow us to use Proposition~\ref{prop:lac.blac} in those cases in which the curvature is not
  positive and decreasing, by applying one of the transformations \eqref{eq:lac.transf}.
\end{remark}

\subsection{Recovering the parameters of an LAC segment} \label{sec:lac.guess}
We focus our attention to the problem of finding the parameters that uniquely identify a given LAC
segment.  We proceed in three steps, where we solve several linear equations in the least squares
sense, with the objective of constructing an algorithm that can be applied to general curves.
Given an LAC segment $\gamma(s)$, $s \in [0, L]$, by possibly applying one of the transformations
\eqref{eq:lac.transf}, we assume that its curvature is positive and strictly monotonic decreasing.
From Proposition \ref{prop:lac.blac}, there exists $p = (\alpha, S, s_0, l, \phi, x_0, y_0)$ such
that $\gamma(s) = \lac[p](s)$, $s \in [0, L]$.  %

\begin{remark}
  In view of Remark \ref{rem:alpha1} we omit the case $\alpha = 1$, and for simplicity in the
  formulation of this method we further omit the case $\alpha = 0$.  Removing these values does not
  hinder the quality of the algorithm, because they are only single points on the real line.
\end{remark}

We start by defining and recalling some useful quantities.  From \eqref{eq:lac.blac},
\begin{equation}
  \gamma(s) = \xy{x_0}{y_0} + S \rotrm{\phi} \blac(s/S + s_0), \qquad s\in[0, L],
\end{equation}
implies that
\begin{equation} \label{eq:cu.bcu}
  \cu(s) = S^{-1} \cu_{\blac}(s/S + s_0).
\end{equation}
Recall that  $R = - \log \cu$, hence
$R(s) = \log S + R_{\blac}(s/S + s_0)$ and using \eqref{eq:blac.def} we have that
$\log (R_{\blac}') + \alpha R_{\blac} = 0$, thus we obtain
\begin{equation} \label{eq:alg1}
  \log (R') + \alpha R = (\alpha - 1) \log S.
\end{equation}

\begin{step} \label{step1}
  Let $c_1 := \alpha$ and $c_0:= (\alpha - 1) \log S$.  Then, from \eqref{eq:alg1}, in the least
  squares sense we have
  \begin{equation}
    (c_0, c_1) = \argmin_{(\bar c_0,\bar c_1)}\left\{\frac{1}{2} \int_0^L (\log (R') + \bar c_1 R -
      \bar c_0)^2 \diff{s} \right\},
  \end{equation}
  which leads to
  \begin{equation} \label{eq:alg.c_1}
    c_1 = \frac{L \int_0^L R \log (R') \diff{s} - \int_0^L R \diff{s} \int_0^L \log (R') \diff{s}}%
    {(\int_0^L R \diff{s})^2 - L \int_0^L R^2 \diff{s}},
  \end{equation}
  and
  \begin{equation} \label{eq:alg.S}
    c_0 = \frac{1}{L} \int_0^L (\log (R')  + c_1 R) \diff{s}.
  \end{equation}
  Then, $\alpha = c_1$ and $S = \exp(c_0/(c_1 - 1))$.
\end{step}

\begin{step} \label{step2}
  From \eqref{eq:blac.cu} and \eqref{eq:cu.bcu}, we have that
  $\cu(s) = S^{-1} (1 + \alpha (s/S + s_0))^{-1/\alpha},$
  which allows us to isolate the parameter $s_0$ as
  \begin{equation}
    s_0 = \frac{(S \cu(s))^{-\alpha}}{\alpha} - \frac{1}{\alpha} - \frac{s}{S}.
  \end{equation}
  Then,
  \begin{equation}
    s_0 := \argmin_{\bar s_0}\left\{ \frac{1}{2} \int_0^L \left(\frac{1}{\alpha S^\alpha
          \cu(s)^\alpha} - \frac{1}{\alpha} - \frac{s}{S} - \bar s_0\right)^2 \diff{s}\right\}
  \end{equation}
  gives
  \begin{equation} \label{eq:alg.s0}
    s_0 = \frac{1}{\alpha L S^\alpha} \int_0^L (\cu(s))^{-\alpha} \diff{s} - \frac{1}{\alpha}
    - \frac{1}{LS} \int_0^L s \diff{s}.
  \end{equation}
  Similarly, we compute $s_{\mathrm{end}} := L/S + s_0$ from
  $\cu^{(3)}(s) = \cu(L-s) = S^{-1} (1 + \alpha (s_{\mathrm{end}} - s/S))^{-1/\alpha}$.  In the
  least squares sense, we obtain
  \begin{equation} \label{eq:alg.s1}
    s_{\mathrm{end}} = \frac{1}{\alpha L S^\alpha} \int_0^L (\cu(L-s))^{-\alpha} \diff{s}
    - \frac{1}{\alpha} + \frac{1}{LS} \int_0^L s \diff{s},
  \end{equation}
  Hence,
  \begin{equation}
    l = s_{\mathrm{end}} - s_0 = \frac{2}{L S} \int_0^L s \diff{s}.
  \end{equation}
\end{step}

\begin{step} \label{step3}
  At this point, we are left with finding the rotation and translation parameters.  For the former,
  note that the angle function of $\gamma$ and $\blac$ differ only by a constant $\phi$,
  as
  \begin{equation}
    \tu(s) = \phi + \tu_{\blac}(s/S + s_0).
  \end{equation}
  Thus, in the least squares sense we obtain
  \begin{equation} \label{eq:alg.phi}
    \phi = \frac{1}{L} \int_0^L(\tu(s) - \tu_{\blac}(s/S + s_0))\diff{s}.
  \end{equation}
  Finally, for the translation $(x_0, y_0)$ we solve \eqref{eq:lac.blac} in the least squares sense,
  \begin{equation} \label{eq:alg.gamma0}
    \xy{x_0}{y_0} = \frac{1}{L} \int_0^L \left(\gamma(s) - S \rotrm{\phi} \blac(s/S + s_0)\right) \diff{s}.
  \end{equation}
\end{step}

\section{Approximation of planar curves}
\subsection{Methodology}
Now we consider the case where a general curve segment is given and we want to find an LAC segment
that is the closest in a $L^2$-distance sense.  For the applications that we have in mind, the input
is always a discrete curve and thus we must consider a discretization of the LAC.
From previous section, we know that any LAC segment, after a possible change of parameterization or
reflection, can be expressed as
\begin{equation} \label{eq:lact1}
  \lac[p](s) = \xy{x_0}{y_0} + S \rotrm{\phi} \blac(s/S + s_0),\qquad s\in[0, L].
\end{equation}
Using the equations for the basic LAC \eqref{eq:blac.cu}, \eqref{eq:blac.tu} and
\eqref{eq:blac.pos}, we write
\begin{equation}
  \lac[p](s) = \xy{x_0}{y_0} + \int_0^s \xy{\cos(\tu_\blac(t/S+s_0) + \phi)}{\sin(\tu_\blac(t/S+s_0)
+ \phi)} \diff{t},\qquad s\in(0,L).
\end{equation}
Let $N \in \NN$, and define
\begin{equation} \label{eq:lac.approx.param}
  \begin{cases}
    h := \frac{L}{N-1},\\
    z := \frac{L}{S(N-1)},
  \end{cases}
\end{equation}
and the discrete curve $\lac[\Theta]_n \in \RR[2]$, $n = 0, \dots, N-1$, such that
\begin{equation} \label{eq:reconstruct_lac}
  \left\{
    \begin{array}{l}
      \lac[\Theta]_n = %
      \lac[\Theta]_{n-1} + h \xy{\cos(\tu_\blac(z n+s_0) + \phi)}{\sin(\tu_\blac(z n+s_0) + \phi)},%
      \qquad n=1,\dots,N-1,\\
      \lac[\Theta]_0 = \xy{x_0}{y_0},
    \end{array}
  \right.
\end{equation}
where we use introduce the notation $\lac[\Theta]$ for the discrete curve that depends on the
parameters
\begin{equation} \label{eq:param.lac_n}
  \Theta := \left(x_0, y_0, h, \phi, z, s_0, \alpha\right).
\end{equation}
Note that, the recursive expression \eqref{eq:reconstruct_lac} gives an approximation of $\lac[p](s)$
of second order, in the sense that it satisfies
\begin{equation}
  \lac[p](hn) = \lac[\Theta]_n + \order[2], \qquad n = 0, \dots, N-1.
\end{equation}
The input curve is a list of $N$ equidistant $(x,y)$-points, which we express as the discrete curve
\begin{equation}
  \gamma_n = \xy{x_n}{y_n},\qquad n=0,\dots, N-1.
\end{equation}
Given $\gamma_n$ we look for the closest $\lac[\Theta]_n$ in the $L^2$-distance sense, i.e., we seek
to find a set of parameters $\Theta^*$ such that
\begin{equation} \label{eq:opt}
  \Theta^* = \argmin_{\Theta \in U} \left\{\frac{1}{2}
    \sum_{n=0}^{N-1} \norm{\lac[\Theta]_n - \gamma_n}^2 \right\},
\end{equation}
with the admissible set $U$ given by
\begin{equation} \label{eq:lac.U}
  U = \left\{(x_0, y_0, \phi, h, z, s_0, \alpha) \in \RR[7] \,\bigg| %
    \begin{array}{l}
      \scalebox{0.9}{$(x_0, y_0)\in \RR[2], h>0, \phi \in [0, 2\pi)$,}\\
      \scalebox{0.9}{$z > 0,  s_0 \in I_\alpha\text{ and }\alpha \in \RR\backslash\{0, 1\}$}
    \end{array}\right\}.
\end{equation}
The optimization problem is solved using the Interior Point Optimizer (IPOPT) package, which for our
purpose can be seen as a gradient descent-like method for nonlinear optimizations (see \cite{ipopt}),
so we need to compute the gradient of the objective function in \eqref{eq:opt},
\begin{equation} \label{eq:opt.obj}
  \mathcal{L}(\Theta) := \frac{1}{2} \sum_{n=0}^{N-1} \norm{\lac[\Theta]_n - \gamma_n}^2.
\end{equation}
We have,
\begin{equation}
  \pderiv{}{\Theta_i}\mathcal{L}(\Theta) = \sum_{n=0}^{N-1} \braket{\lac[\Theta]_n - \gamma_n}{\pderiv{}{\Theta_i}\lac[\Theta]_n}, %
  \qquad \Theta_i = x_0, y_0, \phi, h, z, s_0, \alpha,
\end{equation}
which is computed recursively from equation \eqref{eq:reconstruct_lac}, using that
\begin{equation}
  \pderiv{}{\Theta_i}\lac[\Theta]_n = \pderiv{}{\Theta_i}\lac[\Theta]_{n-1} +
  \begin{cases}
    0 & \Theta_i = x_0, y_0,\\
    T_n & \Theta_i = h,\\
    h \pderiv{}{\Theta_i}T_n & \text{otherwise},
  \end{cases}
\end{equation}
with
\begin{equation}
  \pderiv{}{\Theta_i}\lac[\Theta]_0 =
  \begin{cases}
    \left(\begin{smallmatrix}1\\0\end{smallmatrix}\right) & \Theta_i = x_0,\\
    \left(\begin{smallmatrix}0\\1\end{smallmatrix}\right) & \Theta_i = y_0,\\
    \left(\begin{smallmatrix}0\\0\end{smallmatrix}\right) & \text{otherwise},
  \end{cases}
\end{equation}
where we denoted $T_n$ as
\begin{equation}
  T_n := \xy{\cos(\tu_\blac(z n+s_0) + \phi)}{\sin(\tu_\blac(z n+s_0) + \phi)}.
\end{equation}
Then, using that $(\tu_\blac)' = \cu_\blac$, we obtain the gradient of $T_n$ as
\begin{equation}
  \pderiv{}{\Theta_i} T_n = \rotrm{\pi/2} T_n \times%
  \begin{cases}
    1, & \Theta_i = \phi,\\
    \cu_\blac(z n + s_0), & \Theta_i = s_0,\\
    n\cu_\blac(z n + s_0), & \Theta_i = z,\\
    \pderiv{}{\alpha}\tu_\blac(z n + s_0), & \Theta_i = \alpha,
  \end{cases}
\end{equation}
where
\begin{equation}%
  \pderiv{}{\alpha}\theta_\blac(s) = \frac{(1+s\alpha)^{-\frac{1}{\alpha}}\Big(%
    (\alpha-1)(1+s\alpha)\log(1+s\alpha) -
    \alpha\big(\alpha+2s\alpha-s\big)\Big)}{\alpha^2(\alpha-1)^2} + %
  \frac{1}{(\alpha-1)^2},%
\end{equation}
which is obtained by direct computation from \eqref{eq:blac.tu}.  We compute the initial guess
$\bar\Theta$ by using a discrete analogue of the three steps described in
Section~\ref{sec:lac.guess}.  We proceed as follows; we first approximate the curvature at each
point $\gamma_n$, $n = 1, \dots, N-2$, by
\begin{equation} \label{eq:disc.cu}
  \cu_n = \frac{2}{h} \frac{\det(T_{n-1}, T_n)}{1 + \braket{T_{n-1}}{T_n}},\qquad n = 1, \dots, N-2,
\end{equation}
which is obtained from the inscribe circle tangent at the middle point of two consecutive segments
\cite{Hoffmann:MI_LN}, where $T_n = (\gamma_{n+1} - \gamma_n)/h$.  Then, let us define the logarithm
of the radius of curvature by
\begin{equation}
  R_n = - \log \cu_n,\qquad n = 1, \dots, N-2,
\end{equation}
and its discrete derivative by
\begin{equation}
  \Delta R_n = -\frac{\cu_{n+1} - \cu_n}{\cu_n}, \qquad n = 1, \dots, N-3.
\end{equation}
From Step \ref{step1}, we obtain
\begin{equation} \label{eq:alg.alpha}
  \bar \alpha = %
  \frac{(N-3)\sum_{n=1}^{N-3}R_n\log\Delta R_n-\sum_{n=1}^{N-3}R_n\sum_{n=1}^{N-3}\log\Delta R_n}%
  {\left(\sum_{n = 1}^{N-3}R_n\right)^2 - (N-3) \sum_{n = 1}^{N-3}R_n^2},
\end{equation}
and
\begin{equation}
  \bar S = h^{\frac{1}{1 - \bar\alpha}}\exp\left(\frac{1}{(\bar\alpha - 1)(N-3)}%
    \sum_{n = 1}^{N-3}\left(\log\Delta R_n  + \bar\alpha R_n\right)\right).
\end{equation}
From Step \ref{step2}, and using that $\bar{z} = \bar{l}/(N-1)$, we obtain
\begin{equation}
  \bar s_0 = \frac{1}{\bar\alpha (N-2) \bar{S}^{\bar\alpha}} \sum_{n=1}^{N-2}(\cu_n)^{-\bar\alpha} -
  \frac{1}{\bar\alpha}  - \frac{(N-1) h}{2 \bar{S}},
\end{equation}
and
\begin{equation}
  \bar{z} =  \frac{(N-1) h}{\bar{S}},
\end{equation}
Finally, from Step \ref{step3}, we obtain
\begin{equation}
  \bar\phi = \frac{1}{N-1} \sum_{n=0}^{N-2} (\tu_n - \tu_{\lac[\bar\alpha]}(\bar{z}n + \bar{s}_0)),
\end{equation}
and
\begin{equation}
  \xy{\bar{x}_0}{\bar{y}_0} = %
  \frac{1}{N} \sum_{n=0}^{N-1} \left(\gamma_n - \lac[\check\Theta]_n\right).
\end{equation}

\begin{figure}[ht]
  \centering
  \includegraphics[width=8cm]{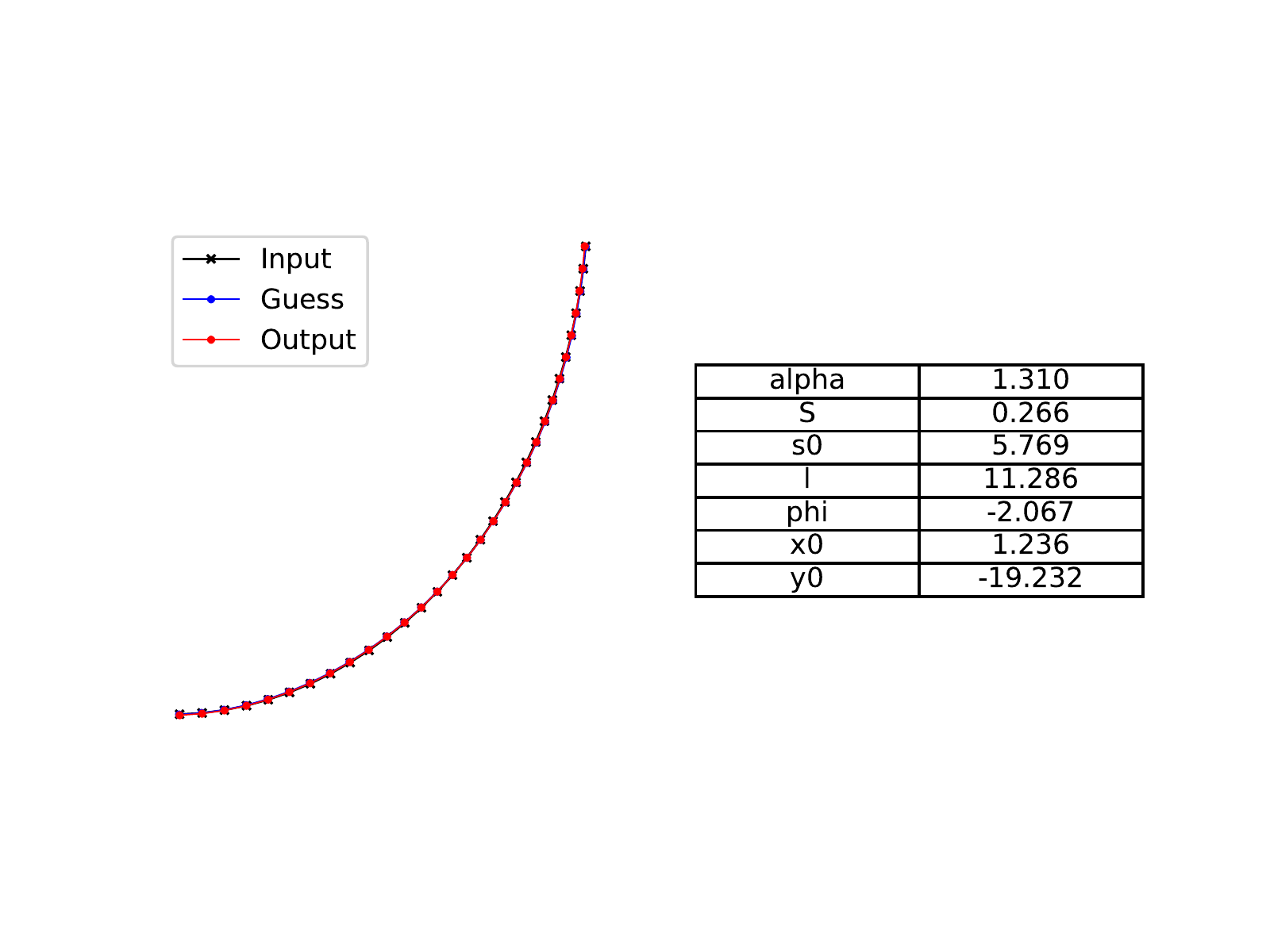}
  \includegraphics[width=8cm]{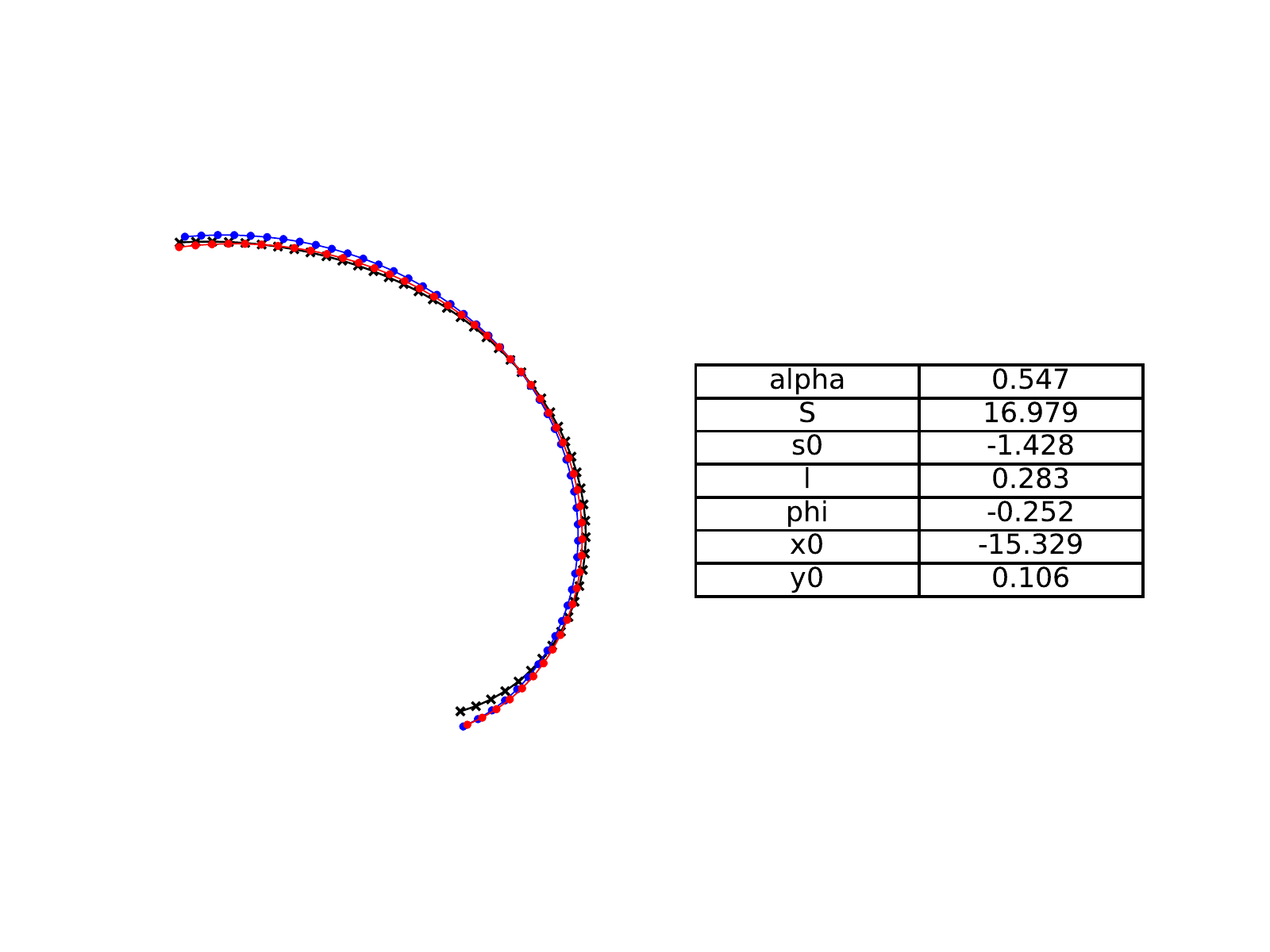}
  \includegraphics[width=8cm]{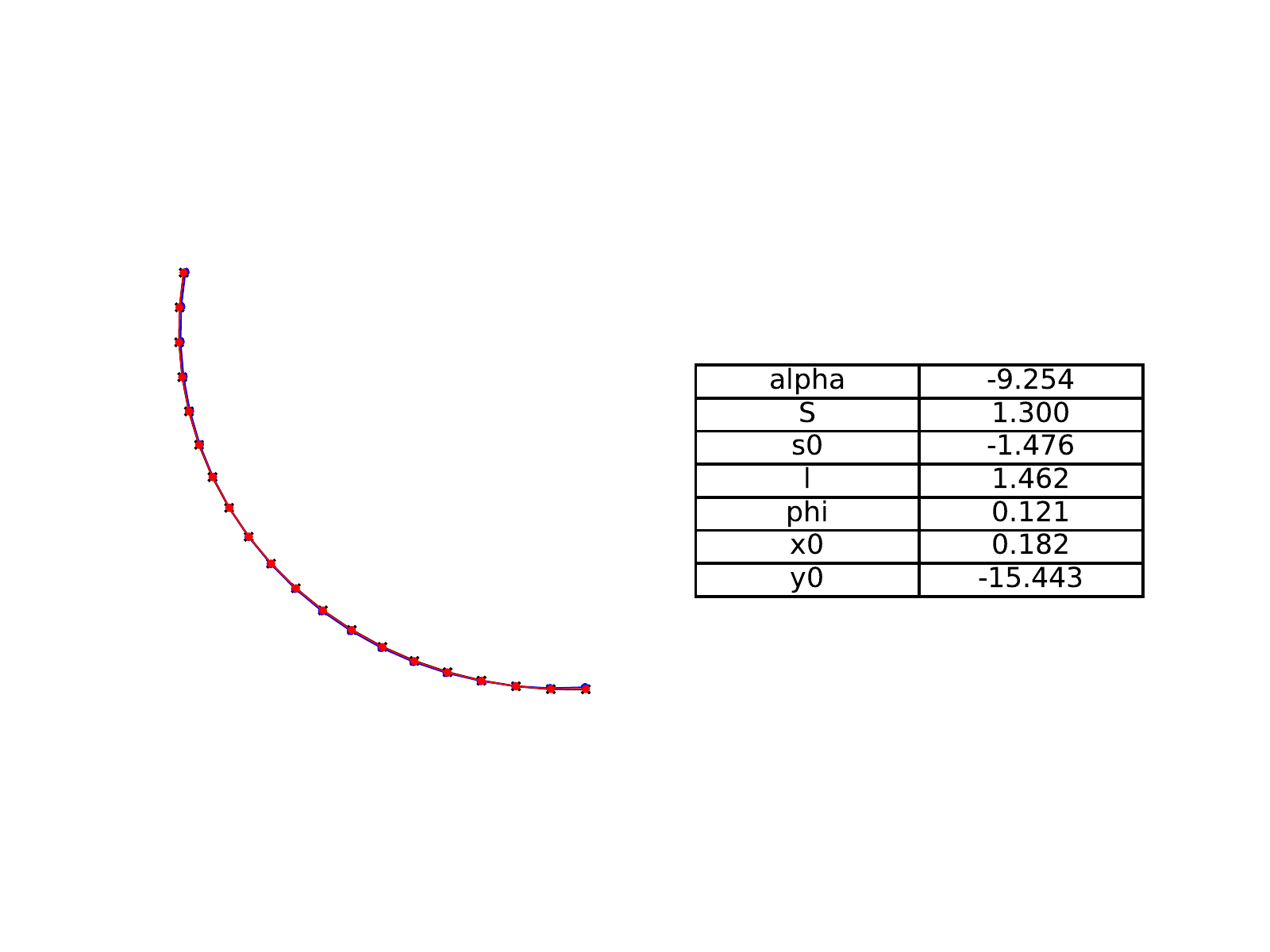}
  \includegraphics[width=8cm]{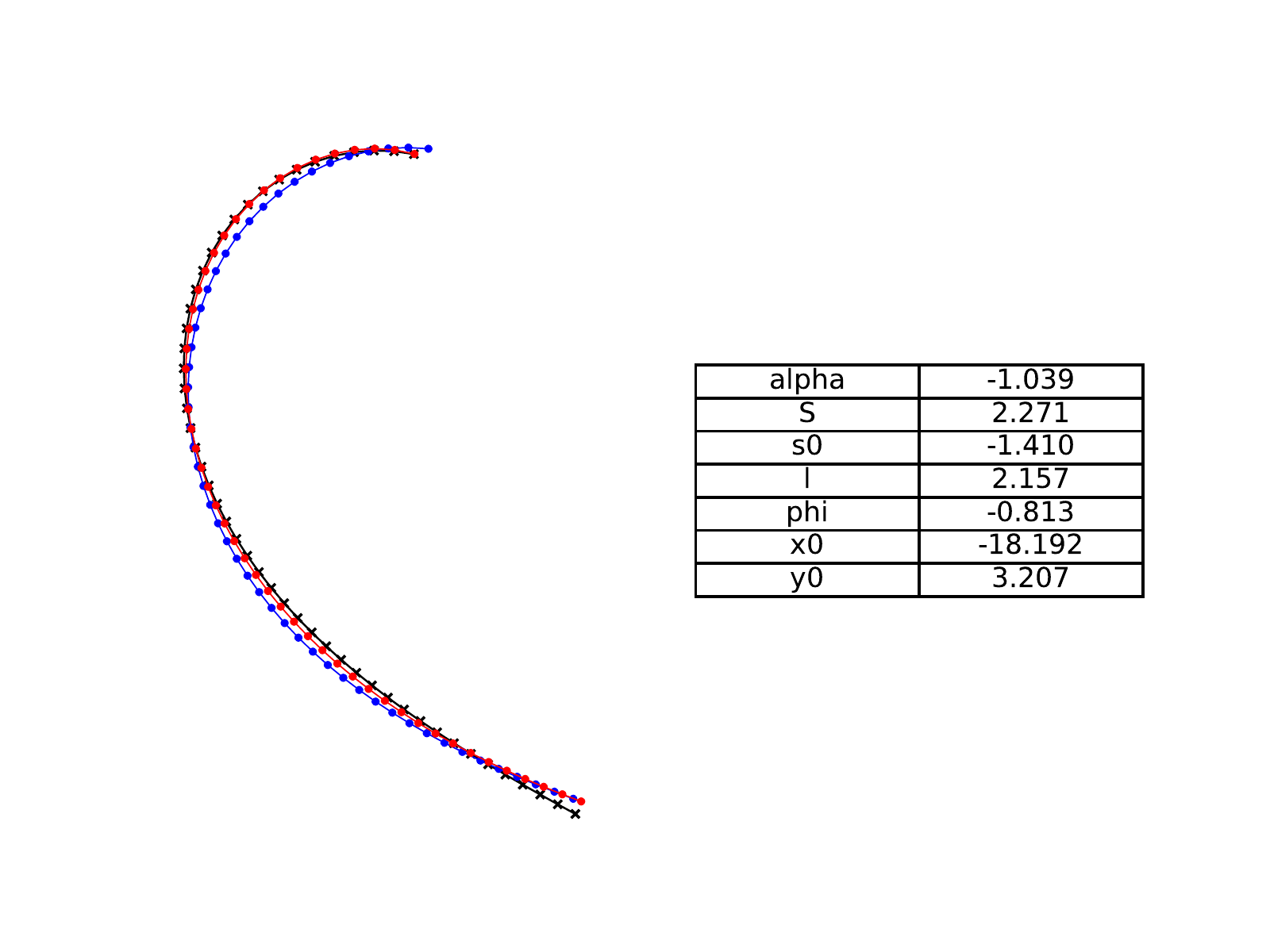}
  \caption{Examples of discrete curves approximated by LAC. Black curves are the input curves,
      blue curves are the first guesses obtained from our algorithm and the red curves are the final
      outputs of the IPOPT program.}
  \label{fig:approx.examples}  
\end{figure}

As a first test of this algorithm we use synthetic data: discrete curves with constant step size
based on B\'ezier curves.  These curves were split in segments with sign preserving monotonic
curvature.  Then, we found the initial guess parameter, plot the resulting curve, and finally
applied the $L^2$-distance minimization algorithm. Some examples are shown in Figure
\ref{fig:approx.examples}.  In the next section we apply this method to real data.

\subsection{Application}
In order to test the fairing algorithm shown in the previous section, we characterize some simple
profile lines of a car's roof (Toyota Prius).  A 3D model was obtained by measuring a scale model
car with a 3D laser scanner (Hexagon 8330-7), see Figure \ref{fig:measuremnt}.  This 3D model, was
stored in STL (Standard Triangle/Tessellation Language) file, which encodes the geometry of the
object in a triangular mesh.  Using Rhinoceros 6, a computer-aided design software, we intercepted
the 3D model with vertical planes, see Figure~\ref{fig:sampledcurves}.  Finally, we projected the
curves into the plane and processed the discrete point to obtain a planar discrete curve with
constant step size.  The length of each curve is approximately $1500$ mm, and the separation between
curve is $100$ mm.
\begin{figure}[ht]
  \centering
  \begin{tabular}{lcr}
    \includegraphics[height=6cm]{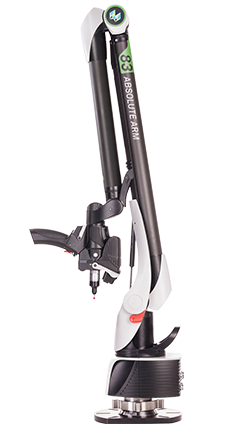} &&
    \raisebox{0.6\height}{\includegraphics[width=6cm]{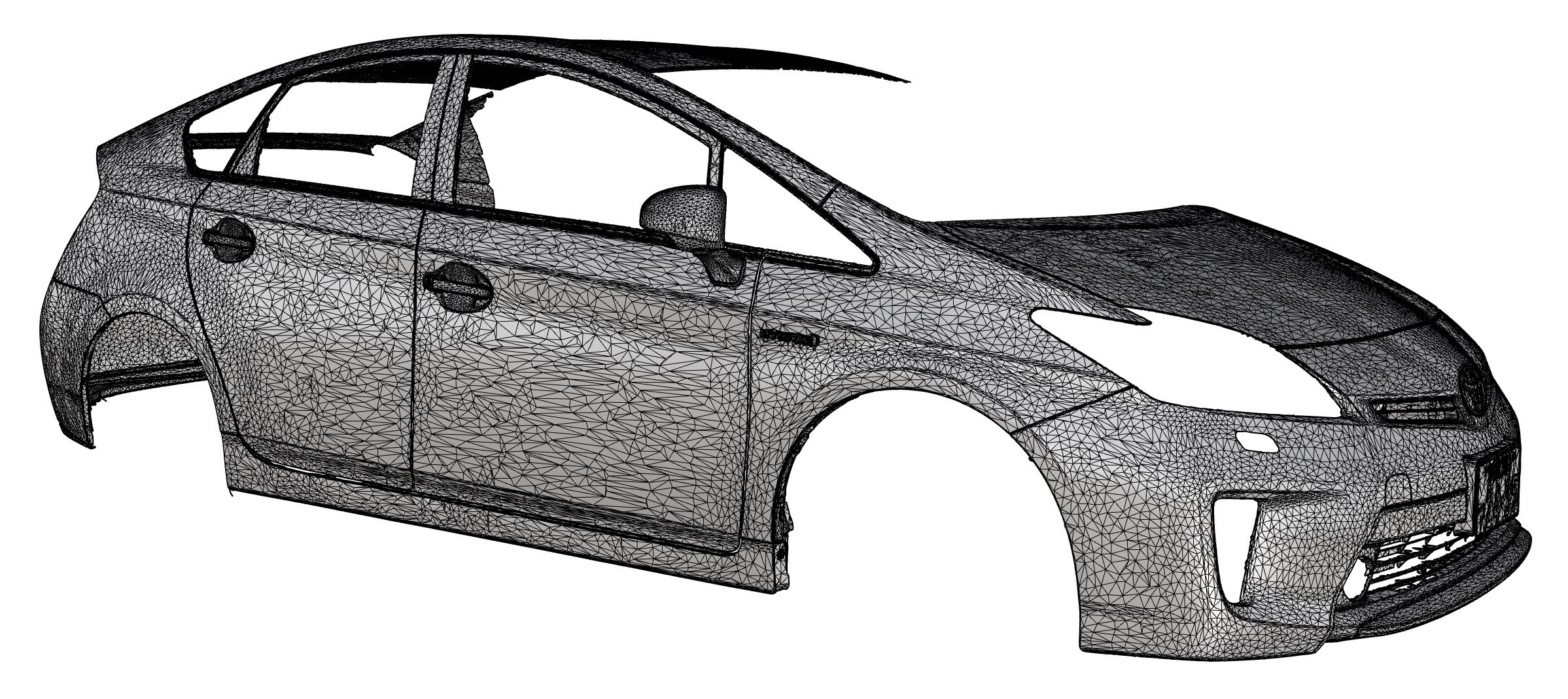}}
  \end{tabular}
  \caption{Left: Hexagon 8330-7 (7 axis arm type 3D laser scanner with measurement accuracy of
      $0.078$ mm). Right: 3D model (STL data).}
  \label{fig:measuremnt}
\end{figure}
\begin{figure}[ht]
  \centering
  \includegraphics[height=5cm]{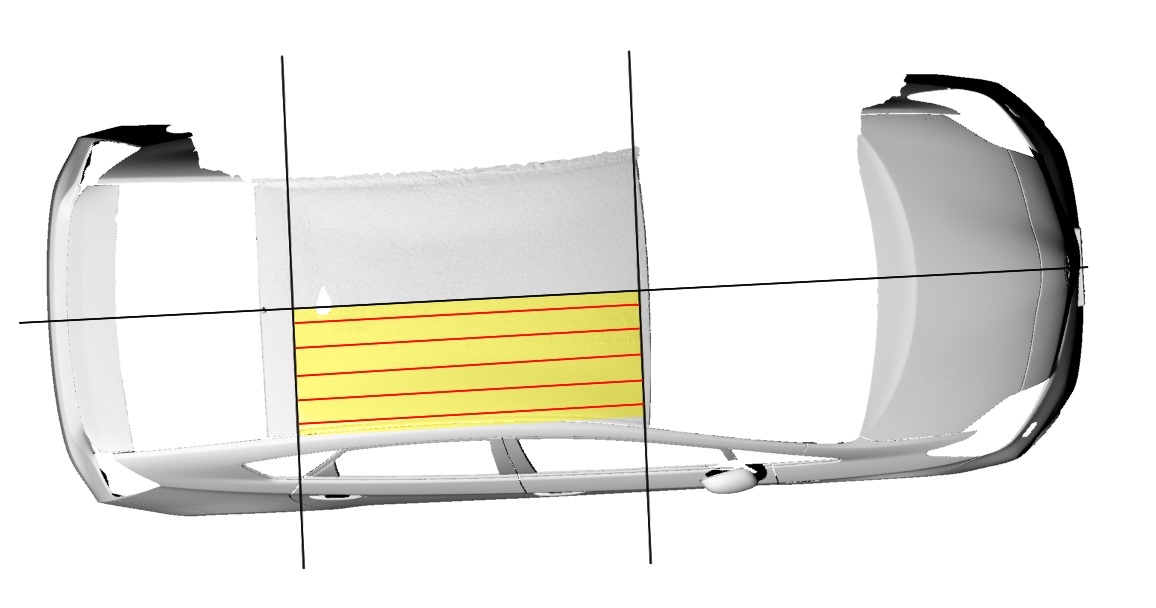}
  \caption{Position of sampled curves, obtained from the interception of the 3D model and
      vertical planes (Software: Rhino 6).  The black lines represent the intercepting planes, and
      the red lines, in the highlighted area, represent the resulting curves. From the center to the
      bottom, curves are labelled as \textsf{ps\_1}, \textsf{ps\_2}, \textsf{ps\_3}, \textsf{ps\_4}
      and \textsf{ps\_5}.}
  \label{fig:sampledcurves}
\end{figure}
We observed that the curvature of the keylines is highly irregular, as a product of the measuring
technique employed.  To reduce the noise, we proceeded as follows.  Let the curve
$\gamma_n \in \RR[2]$, $n = 0, \dots, N-1$ with a constant step size
$\norm{\gamma_{n+1} - \gamma_n} = h > 0$, be the raw data; then:
\par\medskip\noindent
{\bf (0)}~---~%
Let $\check N = 3$.
\par\medskip\noindent
{\bf (1)}~---~%
For $\check N < N$, apply the Ramer--Douglas--Peucker algorithm (see \cite{doug73,rame72}) to
$\gamma_n$, to obtain a new curve $\check \gamma_n$, $n = 0, \dots, \check N-1$, such that
$\check\gamma_0 = \gamma$ and $\check\gamma_{\check N -1} = \gamma_{N-1}$.
\par\medskip\noindent
{\bf (2)}~---~%
Construct a cubic spline curve $\csp(t)$, $t \in [0, L]$ using $\check\gamma_n$,
$n = 0, \dots, \check N-1$, as the control points; hence, $\csp(0) = \gamma_0$ and
$\csp(L) = \gamma_{N-1}$.
\par\medskip\noindent
{\bf (3)}~---~%
Construct a discrete curve $\bar\gamma_n$, $n = 0, \dots, N-1$ with step size $h$, by sampling the
cubic spline $\csp(t_n)$ in a appropriate manner such that $\norm{\csp(t_{n+1}) - \csp(t_n)} = h$
and $\bar\gamma_n = \csp(t_n)$.
\par\medskip\noindent
{\bf (4)}~---~%
If the residual $R$ is greater than a prescribed error $\eta$,
\begin{equation}
  R = \frac{1}{L^2} \sum_{n=0}^{N-1}\norm{\bar\gamma_n - \gamma_n}h > \eta,
\end{equation}
repeat from step (1), with a greater value of $\check N$. Otherwise, the process ends.

\begin{figure}[ht]
  \centering
  \includegraphics[width=13cm]{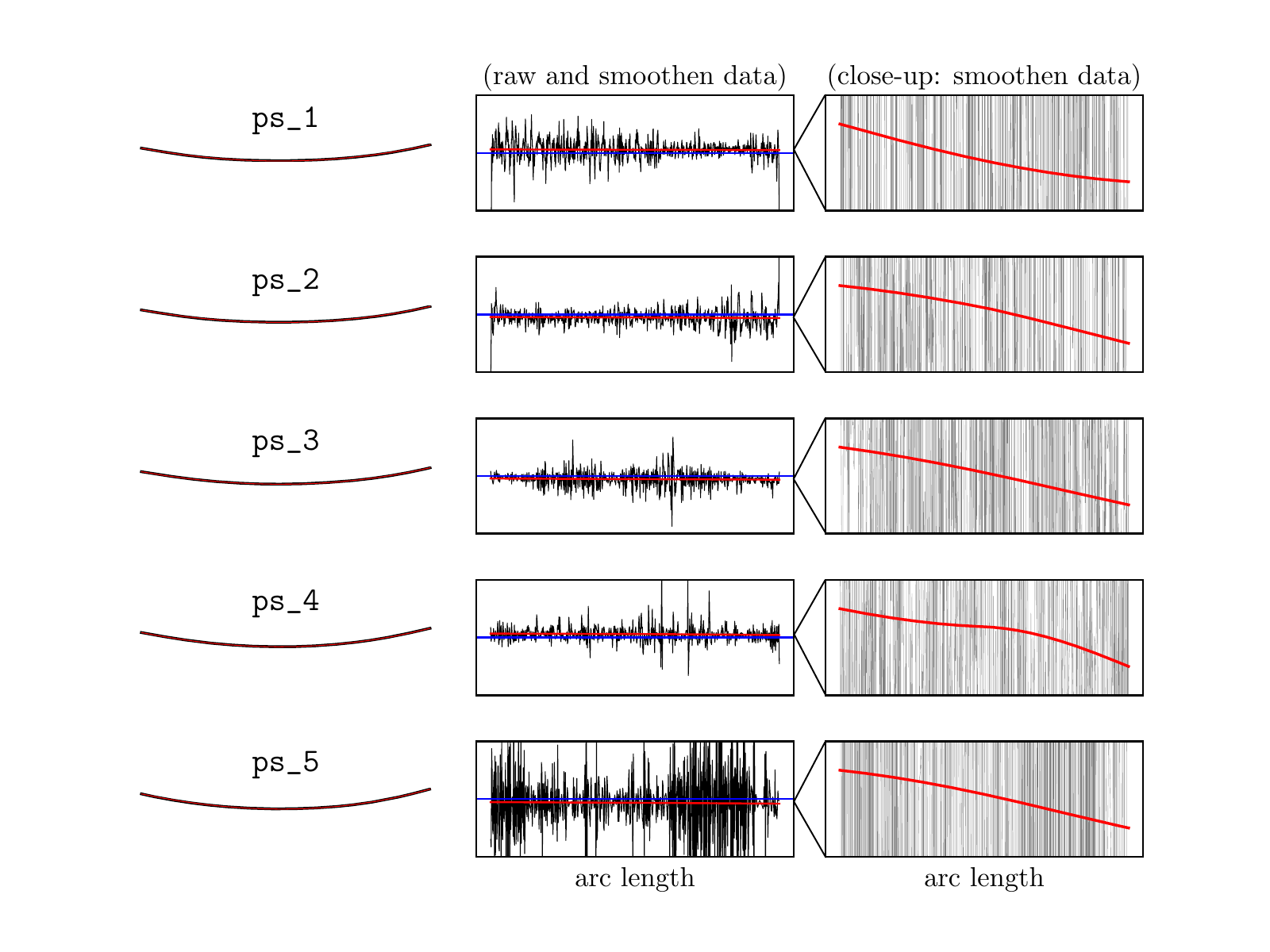} 
  \caption{Input curves without smoothing. Left column: Each curve represent a different
      section of the car's roof. Black lines are raw data and red lines are the smoothen data (the
      input curves after noise reduction).  Left column: raw and smoothen curves in the plane,
      practically overlapping with each other.  Middle column: Curvature plot vs arc length for
      their corresponding input curves, where the blue line is a reference for the constant
      $\cu = 0$.  Right column: same as middle column, with a close-up on the smoothen data.}
  \label{fig:app.input}  
\end{figure}

\par\medskip
In our case, we used $\eta = 10^{-3}$, and we observed that this produces a smoother plot for the
curvature, see Figure \ref{fig:app.input}.  Hence, we use $\bar\gamma_n$, instead of $\gamma_n$, as
the input curve for the approximation algorithm.  However, we noticed that the admissible set $U$
for the parameters $\Theta$, as defined in \eqref{eq:lac.U}, was too broad and unexpected jumps in
the value of the parameters produced unrealistic outputs.  To keep the optimization relatively close
to the initial guess, we decided to constrain the admissible set to $U[\bar\Theta]$, defined by
\begin{equation}
  \bar U_{0.1} =%
  U \cap \left\{\Theta\in\RR[7] : \abs{\Theta_i - \bar\Theta_i} < 0.1\bar\Theta_i\right\},
\end{equation}
where $\bar\Theta$ is the initial guess for the IPOPT method.  In this way, $\bar U_{0.1}$ constrain
the final result to be in a $10\%$ range of the initial guess.  Final results are presented in
Figure~\ref{fig:app.result}.  We note that the parameters that we obtained provide a good fit of the
input curve.  We can observe that, despite the curves being similar in shape to each other, the
values of the parameter $\alpha$ have big variations.  However, the final result is still a good
fit, after finding the remaining parameters.  It is interesting to note that, the algorithm that we
provided allow us to input the parameter $\alpha$ by hand (or by replacing \eqref{eq:alg.alpha} by
an alternative expression or algorithm) and then we can continue with the next steps without any
further change.
We conclude that the method proposed has a good performance, however further analysis on the
recovery of the parameter $\alpha$ is required.

\begin{figure}[ht]
  \centering
  \includegraphics[width=13cm]{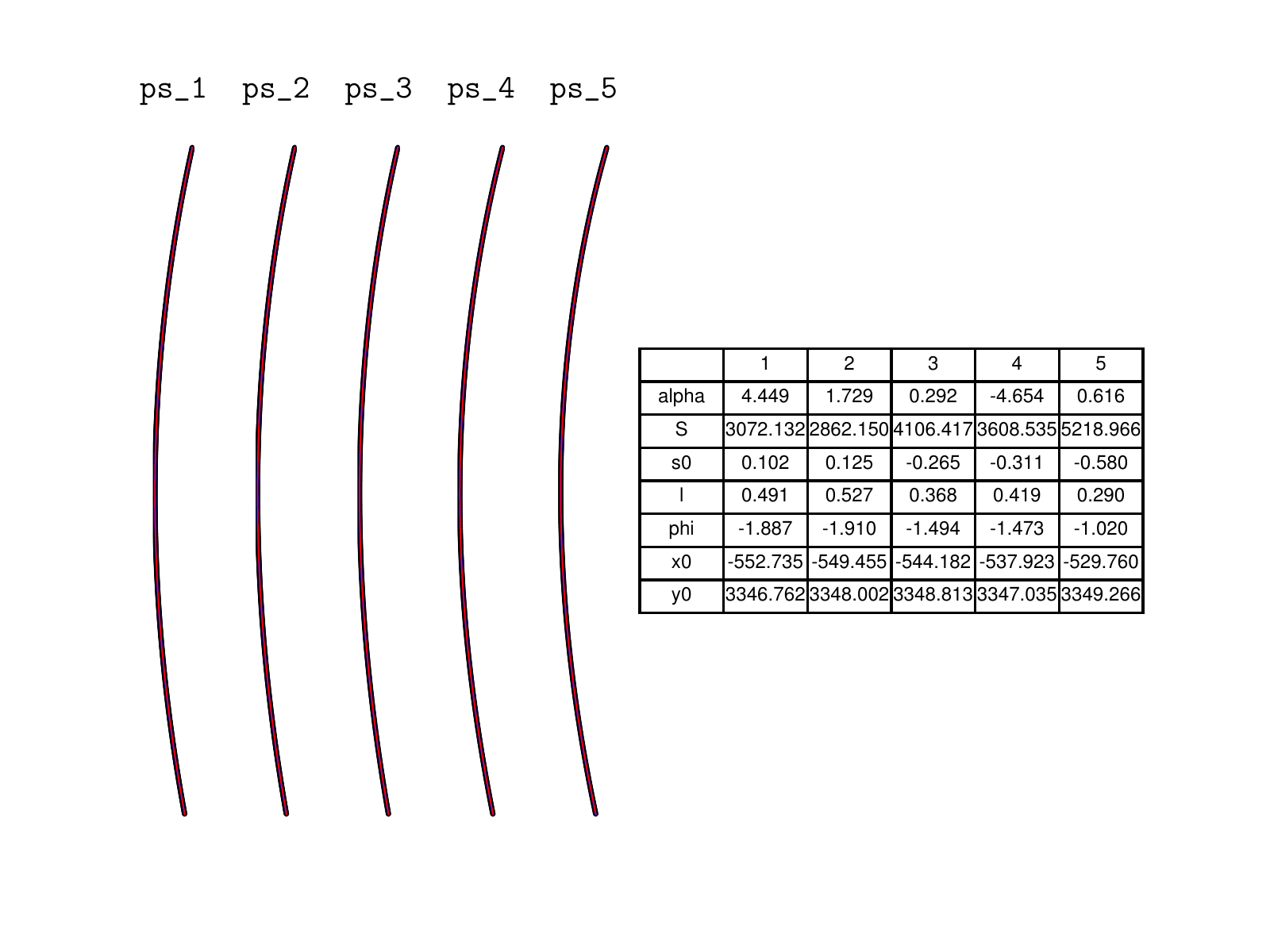}
  \caption{Discrete curves approximated by LAC. Each curve represent a different section of
      half of the car roof, taken at $100$ mm apart. Black lines: input curves. Red lines: LAC
      output.}
  \label{fig:app.result}  
\end{figure}

\section{Concluding remarks}
In the present work we briefly reviewed the notion of log-aesthetic curve, and showed how the family
of LAC with a given parameter $\alpha$ can be described as segments of a basic LAC after applying
the similarity transformations and shift of the arc length parameter (Proposition
\ref{prop:lac.blac}).  This constitutes the main result of this work.  In a second instance, we used
this characterization and provided an algorithmic way to recover the seven parameters
$p = (\alpha, S, s_0, l, \phi, x_0, y_0)$ that uniquely identify a given LAC segment. Finally, this
algorithm was later used to obtain the initial input for the gradient descent-like method used to
approximate a general planar curve segment by LAC (Equation \eqref{eq:opt}).
Regarding the applications, we developed this method aiming to be used as a reverse engineering tool
that characterizes existing objects. In view of this, we mention that further improvements in the
methodology employed to find the parameter $\alpha$ might be necessary, in particular, if we are
interested in characterize a given design by a concrete value of $\alpha$. Further analysis
regarding this will be considered in further works.

\section*{Acknowledgements} 
This work was supported by JSPS Kakenhi JP21K03329 and JST CREST Grant Number JPMJCR1911.

\end{document}